\newtheorem{thm}{Theorem}[section]
\newtheorem{lem}[thm]{Lemma}
\newtheorem*{thm1}{Theorem A}
\theoremstyle{definition}
\theoremstyle{remark}
\newtheorem{rem}[thm]{Remark}
\let\c@equation\c@thm
\numberwithin{equation}{section}
\title[]{Examples of Ricci limit spaces with \\ non-integer Hausdorff dimension}
\author[]{Jiayin Pan}
\address[Jiayin Pan]{Department of Mathematics, University of California, Santa Barbara, CA, USA.}
\email{jypan10@gmail.com}
\thanks{J. Pan is partially supported by AMS Simons travel grant.}
\author[]{Guofang Wei}
\address[Guofang Wei]{Department of Mathematics, University of California,  Santa Barbara, CA, USA.}
\email{wei@math.ucsb.edu}
\thanks{G. Wei is partially supported by NSF DMS 1811558.}
\begin{document}

\begin{abstract}
	We give the first examples of collapsing Ricci limit spaces on which the Hausdorff dimension of the singular set exceeds that of the regular set; moreover, the Hausdorff dimension of these spaces can be non-integers. This answers a question of Cheeger-Colding \cite[Page 15]{ChCo00a} about collapsing Ricci limit spaces. 	
\end{abstract}

\maketitle

Recall that for any sequence of complete $n$-dimensional Riemannian manifolds $(M_i,p_i)$ with Ricci curvature uniformly bounded from below, there is always a subsequence Gromov-Hausdorff converging to a metric space $(X,p)$. $X$ is called a Ricci limit space. The regularity theory of Ricci limit spaces has been studied extensively by Cheeger, Colding, and Naber. A point $x\in X$ is called $k$-regular if every tangent cone at $x$ is isometric to $\mathbb{R}^k$, where $k$ is an integer; otherwise, it is called singular. By the work of Cheeger-Colding \cite{ChCo97,ChCo00b} and Colding-Naber \cite{CoNa12}, $X$ has a unique integer $k\in[0,n]$ such that $\mathcal{R}^k$ has full measure and $X$ is $k$-rectifiable with respect to any limit measure of $X$, where $\mathcal{R}^k$ is the set of $k$-regular points of $X$. This integer $k$ is called the rectifiable dimension of $X$. 

When the sequence is non-collapsing $\mathrm{vol}(B_1(p_i))\ge v>0$, the limit space $X$ has rectifiable dimension $n$, which is also the Hausdorff dimension of $X$; moreover, the singular set has Hausdorff dimension at most $n-2$ \cite{ChCo97}. When the sequence is collapsing $\mathrm{vol}(B_1(p_i))\to 0$, as pointed out in  \cite[Page 15]{ChCo00a}, an important question is whether the Hausdorff dimension of the singular set could exceed that of the regular set. Equivalently, one can ask if the rectifiable dimension of $X$ equals the Hausdorff dimension of $X$ \cite[Open Problem 3.2]{Na20}. In this paper, we give answers to these open questions by providing examples of Ricci limit spaces on which the Hausdorff dimension of the singular set is larger than that of the regular set; therefore, the Hausdorff dimension of these examples is larger than their rectifiable dimension. Moreover, their Hausdorff dimension can be non-integers.

\begin{thm1} \label{thm-main}
	Given any $\beta>0$, for $n$ sufficiently large (depending on $\beta$), there is an open $n$-manifold $N$ with $\mathrm{Ric}>0$, whose asymptotic cone $(Y,y)$ satisfies the following:\\
	(1) $Y=\mathcal{S}\cup \mathcal{R}^2$, where $\mathcal{S}$ is the singular set and $\mathcal{R}^2$ is the set of $2$-regular points;\\
	(2) $\mathcal{S}$ has Hausdorff dimension $1+\beta$.\\
	When $\beta>1$, $Y$ has Hausdorff dimension $1+\beta$, larger than its rectifiable dimension.
\end{thm1}

By \cite{ChCo00b}, the above $\mathcal{R}^2$ has Hausdorff dimension $2$ and $Y$ has rectifiable dimension $2$. Thus for $\beta>1$, the Hausdorff dimension of $\mathcal{S}$ is larger than that of the regular set; also, $Y$ has Hausdorff dimension $1+\beta$, which is not an integer when $\beta \not\in\mathbb{N}$. Note that with respect to any limit measure, the regular set always has full measure \cite{ChCo97}. 
Our examples show that for a collapsing Ricci limit space, its limit measure and any dimensional Hausdorff measure may not be mutually absolutely continuous.

We point out that $Y$ is non-polar at $y$; see Remark \ref{rem_polar}. The non-polarity of $(Y,y)$ is consistent with \cite[Theorem 1.38]{ChCo00a}: if any iterated tangent cone of $Y$ is polar, then the Hausdorff dimension of $Y$ is an integer. The first examples of non-polar asymptotic cones are constructed by Menguy \cite{Men00}.

We can also use the examples in Theorem A to construct compact Ricci limit spaces with non-integer Hausdorff dimension; see Remark \ref{rem_cpt}.

Our examples are indeed closely related to some known examples \cite{Nab80,Ber86,Wei88}. Nabonnand showed that for suitable functions $f(r)$ and $h(r)$, the doubly warped product $[0,\infty)\times_f S^2 \times_h S^1$ has positive Ricci curvature \cite{Nab80}. Our construction starts with a doubly warped product $M=[0,\infty)\times_f S^{p-1}\times_h S^1$, where warping functions  $f(r), h(r)$ are the ones in \cite{Wei88} and $h(r)$ has decay rate $r^{-\beta}$;
then we take the Riemannian universal cover $N$ of $M$ as our example in Theorem A.

One key feature of the universal cover $N$ with $\pi_1(M,p)$-action is that, $d(\gamma^l\tilde{p},\tilde{p})$ is comparable to $l^{\frac{1}{1+\beta}}$, where $p\in M$ at $r=0$, $\tilde{p}\in N$ is a lift of $p$, and $\gamma$ is a generator of $\pi_1(M,p)\cong \mathbb{Z}$. With this, after passing to the asymptotic cone $(Y,y)$, the limit orbit at $y$ has Hausdorff dimension $1+\beta$ (see Section 1.3).

We would like to thank Jeff Cheeger and Aaron Naber for very helpful comments. 

\section{The examples and their geometry}

\subsection{Riemannian metric on $M$}

On $[0,\infty)\times S^{p-1}\times S^1$, where $p\ge 2$, we define a doubly warped product metric as
$$g=dr^2+f(r)^2ds_{p-1}^2+h(r)^2 ds_1^2,$$
where $ds_k^2$ is the standard metric on the unit sphere $S^{k}$. At $r=0$, we require that
$$f(0)=0,\quad f'(0)=1,\quad f''(0)=0,\quad h(0)>0,\quad h'(0)=0.$$
Then $(M,g)$ is a Riemannian manifold diffeomorpic to $\mathbb{R}^p\times S^1$.

Let $H=\partial/\partial r$, $U$ a unit vector tangent to $S^{p-1}$, and $V$ a unit vector tangent to $S^1$. The metric has Ricci curvature
\begin{align*}
	&\mathrm{Ric}(H,H)=-\dfrac{h''}{h}-(p-1)\dfrac{f''}{f},\\
	&\mathrm{Ric}(U,U)=-\dfrac{f''}{f}+\dfrac{p-2}{f^2} \left[1-(f')^2\right]-\dfrac{f'h'}{fh},\\
	&\mathrm{Ric}(V,V)=-\dfrac{h''}{h}-(p-1)\dfrac{f'h'}{fh}.
\end{align*}

We use 
$$f(r)=r(1+r^2)^{-1/4},\quad h(r)=(1+r^2)^{-\alpha},$$
where $\alpha>0$ \cite{Wei88}. On $(0,\infty)$, $f$ and $h$ satisfy
$$f'>0,\quad 1-(f')^2>0,\quad f''<0,\quad h'<0.$$
Then $\mathrm{Ric}(U,U)>0$ always holds. Also,
\begin{align*}
	\mathrm{Ric}(H,H)&> \dfrac{r^2}{(1+r^2)^2}\left[\dfrac{p-1}{4}-(2\alpha+4\alpha^2)\right]\\
	\mathrm{Ric}(V,V)&>\dfrac{\alpha r^2}{(1+r^2)^2}\left[p-(3+4\alpha)\right]
\end{align*}
When integer $p\ge \max\{4\alpha+3, 16\alpha^2+8\alpha+1\}$, $(M,g)$ has positive Ricci curvature. From now on, we denote this open Riemannian manifold as $(M,g_\alpha)$.

\subsection{Distance estimate on Riemannian universal cover $N$}

Let $(M,g_\alpha)$ as constructed above and let $(N,\tilde{g_\alpha})$ be the Riemannian universal cover of $(M,g_\alpha)$. Let $S^1(r)$ be a copy of $S^1$ in $M$ at distance $r$; it has length $L(r)=2\pi(1+r^2)^{-\alpha}.$ As the length of $S^1(r)$ decreases a lot, one expects that the length of minimal geodesics representing $\gamma^l$ grows much smaller than $l$, for $\gamma$ representing $S^1$  at $r=0$. Namely,

\begin{lem}\label{dist_base}
	Let $p\in M$ be a point with $r=0$ and let $\gamma$ be a generator of $\pi_1(M,p)\cong\mathbb{Z}$. Then for all positive integers $l\ge 9^{1+\frac{1}{2\alpha}}$,
	$$C\cdot l^{\frac{1}{1+2\alpha}}-2\le d(\gamma^l \tilde{p},\tilde{p}) \le 9\cdot l^{\frac{1}{1+2\alpha}},$$
	where $C =2 \cdot 9^{-\frac{1}{2\alpha}}$ and $\tilde{p}\in N$ is a lift of $p$.
\end{lem}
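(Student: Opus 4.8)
The plan is to translate $d(\gamma^l\tilde p,\tilde p)$ into a statement about loops in $M$ and then analyze the competition between the radial cost of reaching a large radius and the cheap winding around the collapsing circle $S^1(r)$ there. Since the covering $N\to M$ is a local isometry, $d(\gamma^l\tilde p,\tilde p)$ equals the length of a shortest loop $\sigma$ based at $p$ whose class in $\pi_1(M,p)\cong\mathbb Z$ is $\gamma^l$. For the upper bound I would simply exhibit such a loop: run radially from $r=0$ out to radius $R:=l^{1/(1+2\alpha)}$, then wind $l$ times around the fiber $S^1(R)$, and return radially. Its length is $2R+l\cdot L(R)=2R+2\pi l(1+R^2)^{-\alpha}$, and since $(1+R^2)^{-\alpha}\le R^{-2\alpha}=l^{-2\alpha/(1+2\alpha)}$ the second term is at most $2\pi l^{1/(1+2\alpha)}$; hence the total is at most $(2+2\pi)\,l^{1/(1+2\alpha)}<9\,l^{1/(1+2\alpha)}$, giving the right-hand inequality.

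For the lower bound, let $\sigma$ be a length-minimizing loop in the class $\gamma^l$, write it in the warped coordinates as $(r(t),\cdot\,,\theta(t))$, and let $R=\max_t r(t)$ and $L$ denote its length. Two competing estimates control $L$. First, since $\sigma$ begins and ends at $r=0$ and attains $r=R$, its radial variation forces $L\ge 2R$. Second, because the class is $\gamma^l$ the total angular variation satisfies $\int|\theta'|\,dt\ge 2\pi l$, and as $h$ is decreasing we have $h(r(t))\ge h(R)$ along $\sigma$, so $L\ge\int h(r)\,|\theta'|\,dt\ge 2\pi l\,h(R)=2\pi l\,(1+R^2)^{-\alpha}$. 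Feeding $R\le L/2$ into the second estimate gives $2\pi l\le L\,(1+(L/2)^2)^{\alpha}$; writing $s=L/2$ and using $s(1+s^2)^{\alpha}\le(1+s)^{1+2\alpha}$, this becomes $(1+s)^{1+2\alpha}\ge\pi l$, so $L=2s\ge 2(\pi l)^{1/(1+2\alpha)}-2$. Since $\pi^{1/(1+2\alpha)}>1>9^{-1/(2\alpha)}$, this is stronger than the claimed bound $C\,l^{1/(1+2\alpha)}-2$ with $C=2\cdot 9^{-1/(2\alpha)}$.

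I expect the lower bound to be the crux. The delicate points are justifying the winding estimate rigorously---that any representative of $\gamma^l$ sweeps total $S^1$-angle at least $2\pi l$, and that monotonicity of $h$ legitimately pulls $h(R)$ out of the integral---and then combining the two opposing bounds (one increasing, one decreasing in $R$) into a single clean inequality in $L$. The slightly lossy constant $C=2\cdot 9^{-1/(2\alpha)}$ and the hypothesis $l\ge 9^{1+1/(2\alpha)}$ dovetail: at $l=9^{1+1/(2\alpha)}$ one has $l^{1/(1+2\alpha)}=9^{1/(2\alpha)}$, so $C\,l^{1/(1+2\alpha)}-2=0$, i.e.\ the threshold is exactly where the lower bound first becomes non-vacuous, and the constant $9$ matches the one in the upper bound.
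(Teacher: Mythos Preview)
Your argument is correct and follows essentially the same approach as the paper: the upper bound via the radial-out, wind-$l$-times, radial-back test loop with $R=l^{1/(1+2\alpha)}$ is identical, and the lower bound rests on the same two inequalities $L\ge 2R$ and $L\ge l\cdot L(R)=2\pi l(1+R^2)^{-\alpha}$ coming from the radial excursion and the monotonicity of $h$. The only tactical difference is in how these are combined: the paper feeds the already-established upper bound $L\le 9\,l^{1/(1+2\alpha)}$ into the second inequality to force $R$ large and then invokes $L\ge 2R$, whereas you substitute $R\le L/2$ directly into the second inequality to obtain a self-contained bound on $L$; this yields a slightly sharper constant but is otherwise the same idea.
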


\begin{proof}
	Let $c_l$ be the minimal geodesic loop based at $p$ that represents $\gamma^l\in \pi_1(M,p)$. We estimate the length of $c_l$.
	
	Upper bound: We consider a loop $\sigma_l$ at $p$ constructed as follows. First it moves along a ray from $p$ to distance $r_l$, where $r_l$ to be determined later; then it goes around $S^1(r_l)$ circle $l$ times; then it moves back to $p$ along the ray. Then
	$$d(\gamma^l \tilde{p},\tilde{p})=\mathrm{length}(c_l)\le\mathrm{length}(\sigma_l)\le 2r_l+l\cdot L(r_l).$$
	We choose $r_l=l^{\frac{1}{1+2\alpha}}$. Then 
	$$d(\gamma^l \tilde{p},\tilde{p})\le (2+2\pi)\cdot l^{\frac{1}{1+2\alpha}}.$$
	
	Lower bound: Let $R_l>0$ be the size of $c_l$, that is, the smallest number so that $c_l$ is contained in the closed ball $\overline{B_{R_l}(p)}$. Since $h(r)$ is decreasing in $r$, we have 
	$$l\cdot L(R_l) \le \mathrm{length}(c_l)\le 9\cdot l^{\frac{1}{1+2\alpha}}.$$
	This yields
	$$R_l \ge 9^{-\frac{1}{2\alpha}}l^{\frac{1}{1+2\alpha}}-1.$$
	The desired lower bound follows from $d(\gamma^l\tilde{p},\tilde{p})\ge 2R_l$.
\end{proof}

On the other hand, we show that for $s$ large and suitable $l$, length of $c_l$ at $r=s$ is still comparable to $l\cdot \mathrm{length}S^1(s)$.

\begin{lem}\label{dist_far}
	Given any $\epsilon\in(0,1)$, $s>0$, let $l$ be any integer between $\frac{1}{2}\epsilon s(1+s^2)^{\alpha}$ and $\epsilon s (1+s^2)^{\alpha}$ (which always exists when $s$ is large). We denote $c_l$ the minimal geodesic loop based at $q$ representing $\gamma^l\in \pi_1(M,q)$, where $q\in M$ satisfies $r=s$ and $\gamma$ is a generator of $\pi_1(M,q)\cong \mathbb{Z}$. Then\\
	(1) $\epsilon\cdot C s \le \mathrm{length}(c_l)\le \epsilon \cdot 2\pi s$, where $C>0$ only depends on $\alpha$.\\
	(2) $\lim_{\epsilon\to 0} \left(\limsup_{s\to\infty} \dfrac{\mathrm{size}(c_l)}{\mathrm{length}(c_l)}\right)=0.$
\end{lem}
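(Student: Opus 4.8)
\emph{Reduction to a surface of revolution.} The plan is to exploit the rotational symmetry of $g_\alpha$ and reduce everything to a $2$-dimensional computation. Since the $O(p)$-action on the $S^{p-1}$-factor is isometric and fixes the basepoint $q$, one may take $c_l$ inside the fixed-point set of the isotropy subgroup, i.e. inside the totally geodesic surface of revolution $\Sigma=\{(r,u_0,\theta)\}$ carrying the metric $dr^2+h(r)^2\,d\theta^2$. On $\Sigma$ the field $\partial_\theta$ is Killing, so along the unit-speed geodesic $c_l$ the angular momentum $J:=h^2\dot\theta$ is conserved and $\dot r^2=1-J^2/h^2$. Because $h$ is strictly decreasing, $c_l$ meets $\{\dot r=0\}$ only at the single outer radius $r_*$ with $h(r_*)=|J|$; hence $c_l$ is one symmetric arch running from $r=s$ out to $\rho_{\max}=r_*$ and back, with $\rho_{\min}=s$, and the closing condition reads
\[ \Theta(J):=2\int_s^{r_*}\frac{J/h^2}{\sqrt{1-J^2/h^2}}\,dr=2\pi l. \]

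\emph{Part (1).} For the upper bound I would use the competitor that traverses $S^1(s)$ exactly $l$ times, of length $2\pi l\,h(s)=2\pi l(1+s^2)^{-\alpha}\le 2\pi\epsilon s$. For the lower bound, two elementary estimates suffice: since $r$ is $1$-Lipschitz, $\mathrm{length}(c_l)\ge 2(\rho_{\max}-s)$, and since $h$ is decreasing, $\mathrm{length}(c_l)\ge\int h|\dot\theta|\ge 2\pi l\,h(\rho_{\max})$. If $\rho_{\max}\le 2s$ the second bound together with $l\ge\frac12\epsilon s(1+s^2)^\alpha$ gives $\mathrm{length}(c_l)\ge\pi\epsilon s\,\big((1+s^2)/(1+4s^2)\big)^\alpha\ge\epsilon\cdot Cs$ for large $s$; if $\rho_{\max}>2s$ the first bound gives $\mathrm{length}(c_l)>2s\ge\epsilon\cdot Cs$. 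This proves (1).

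\emph{Part (2): the crux.} Here I must show $\mathrm{size}(c_l)$ is negligible compared to $\mathrm{length}(c_l)$ after letting $s\to\infty$ and then $\epsilon\to0$; the difficulty is that the obvious bound $\rho_{\max}-s\le\frac12\mathrm{length}(c_l)\le\pi\epsilon s$ only yields a ratio of size $O(1)$. The point is that the radial excursion is in fact of order $\epsilon^2 s$. First I would record the crude a priori bound $\delta:=\rho_{\max}-s\le\pi\epsilon s$ just noted, so that the arch stays in $[s,(1+\pi\epsilon)s]$; on this interval $h$ and $|h'|$ vary only by factors $1+O(\epsilon)$ as $s\to\infty$, which is what breaks the circularity below. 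Rewriting $\Theta(J)=2\int_s^{r_*}\frac{J\,dr}{h\sqrt{(h-J)(h+J)}}$ and substituting $u=h(r)$, the integrable singularity at $u=J=h(r_*)$ produces a lower bound of the form $\Theta(J)\ge c_0\,(h(s)|h'(s)|)^{-1/2}\sqrt{\delta}$ with $c_0>0$ depending only on $\alpha$. Combined with $\Theta(J)=2\pi l$ this gives $\delta\le C_1\,l^2\,h(s)|h'(s)|$; since $h(s)|h'(s)|=2\alpha s(1+s^2)^{-2\alpha-1}$ and $l\le\epsilon s(1+s^2)^\alpha$, I obtain $\delta\le C_2\,\epsilon^2 s$ for all large $s$.

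\emph{Conclusion of (2).} Every point of $c_l$ lies at radius $r\in[s,r_*]$ and can be joined to $q$ by first moving radially (length $\le\delta$) and then along $S^1(s)$ (length $\le\frac12 L(s)=\pi h(s)$), so $\mathrm{size}(c_l)\le\delta+\pi h(s)\le C_2\epsilon^2 s+\pi(1+s^2)^{-\alpha}$. Dividing by the lower bound $\mathrm{length}(c_l)\ge\epsilon Cs$ from (1) gives
\[ \frac{\mathrm{size}(c_l)}{\mathrm{length}(c_l)}\le\frac{C_2}{C}\,\epsilon+\frac{\pi(1+s^2)^{-\alpha}}{\epsilon Cs}. \]
Letting $s\to\infty$ kills the second term, and then $\epsilon\to0$ kills the first, proving (2). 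The main obstacle is exactly the sharp radial estimate $\delta\lesssim\epsilon^2 s$: one must extract the turning-point $\sqrt{\delta}$-scaling of the winding integral and use the a priori bound $\delta\le\pi\epsilon s$ to control the variation of $h$ and $h'$ across the arch; without this, the factor $\epsilon$ (rather than a constant) in the ratio does not appear.
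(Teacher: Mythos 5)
Your proof is correct in substance, and while part (1) coincides with the paper's argument (wind-count times smallest circumference, plus the a priori bound on the radial extent; note $((1+s^2)/(1+4s^2))^\alpha\ge 4^{-\alpha}$ for \emph{all} $s$, so your ``for large $s$'' is unnecessary), your part (2) takes a genuinely different route. The paper never reduces to a surface or integrates the geodesic equation: it composes with the distance-nonincreasing map $F:(r,u,v)\mapsto (r,v)$ into the \emph{flat} product cylinder $[0,s+d_l]\times S^1(s+d_l)$ and uses the Pythagorean lower bound $\mathrm{length}(c_l)\ge 2\left[d_l^2+l^2\pi^2(1+(s+d_l)^2)^{-2\alpha}\right]^{1/2}$ for a loop winding $l$ times and spanning height $d_l$; combined with the upper bound from (1) this gives $d_l^2/s^2\le \epsilon^2\pi^2\left[1-\left((1+s^2)/(1+(s+d_l)^2)\right)^{2\alpha}\right]$, and in the double limit $\delta^2\le\epsilon^2\pi^2\left[1-(1+\delta)^{-4\alpha}\right]$, whence $\delta/\epsilon\to 0$. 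Your Clairaut/turning-point analysis isolates the same mechanism --- the slack between the circumference at radius $s$ and at radius $s+\delta$ --- but quantitatively: the $\sqrt{\delta}$-scaling of the winding integral gives $\delta\lesssim l^2\,h(s)|h'(s)|\lesssim \epsilon^2 s$ directly. In fact the paper's inequality yields the same rate in the limit ($1-(1+\delta)^{-4\alpha}\le 4\alpha\delta$ gives $\delta\le 4\alpha\pi^2\epsilon^2 s$ asymptotically), so the two are quantitatively equivalent; what the paper's comparison buys is softness (no ODE, no structure theory of the loop, applies verbatim to any minimizer of any shape), while yours buys an explicit geometric picture of $c_l$ as a single symmetric arch.

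Two steps need patching. First, your symmetry reduction is too quick: the subgroup of $O(p)$ fixing $q$ is the isotropy group $O(p-1)$ of $u_0\in S^{p-1}$, and invariance of the \emph{set} of minimizers does not place any particular minimizer in the fixed surface unless the minimizer is unique. The clean fix is your own projection in disguise: $\pi:(r,u,v)\mapsto(r,v)$ onto the warped cylinder $\left([0,\infty)\times S^1,\ dr^2+h^2d\theta^2\right)$ is $1$-Lipschitz and an isomorphism on $\pi_1$, and this cylinder embeds back into $M$ as the totally geodesic surface through $u_0$; so the minimal lengths in the two spaces agree, and equality in the pointwise inequality $|d\pi(\dot c)|\le|\dot c|$ forces $f^2|\dot u|^2\equiv 0$ along \emph{every} minimizer, i.e.\ every $c_l$ lies in such a surface (this matters because the lemma is applied to arbitrary minimal representatives). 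Second, your claim $\rho_{\min}=s$ uses that $h$ is strictly decreasing, but on the doubled totally geodesic surface the profile is $h(|r|)$, which is even: a geodesic moving inward has no turning point before crossing $r=0$, so an inward excursion is only excluded by $\mathrm{length}(c_l)\le 2\pi\epsilon s<2s$, i.e.\ for $\epsilon<1/\pi$. This is harmless for (2), where $\epsilon\to 0$ (and your part (1) never uses $\rho_{\min}=s$), but it should be said; with these repairs the argument is complete.
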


\begin{proof}
	(1) It is clear that
	$$\mathrm{length}(c_l)\le l\cdot 2\pi(1+s^2)^{-\alpha}\le \epsilon\cdot 2\pi s.$$
	Let $d_l$ be the size of $c_l$, that is, the smallest number so that $c_l$ is contained in $\overline{B_{d_l}}(q)$.
	Clearly, $d_l$ has a rough upper bound 
	$$d_l\le \frac{1}{2}\mathrm{length}(c_l)\le \epsilon\cdot \pi s.$$
	Then 
	\begin{align*}
		\mathrm{length}(c_l)&\ge l\cdot 2\pi(1+(s+d_l)^2)^{-\alpha}\\
		&\ge \epsilon \pi s \left( \dfrac{1+s^2}{1+(s+\epsilon \pi s)^2}\right)^\alpha\\
		&\ge \epsilon \pi s\cdot  \left(\dfrac{1}{1+\epsilon\pi}\right)^{2\alpha}\ge \dfrac{\pi}{(1+\pi)^{2\alpha}}\epsilon s.
	\end{align*}
	
	(2) To find a better estimate for $d_l$, we consider a bounded cylinder $$W=[0,s+d_l]\times S^1(s+d_l)$$ with product metric. The map
	$$F: \overline{B_{s+d_l}}(p) \to W,\quad (r,u,v)\mapsto (r,v)$$
	is distance non-increasing, where $(r,u,v)$ corresponds to the coordinate $[0,\infty)\times_f S^{p-1} \times_h S^1$ and $\overline{B_{s+d_l}}(p)$ is endowed with intrinsic metric. It is clear that $F(c_l)$ is contained in $[s,s+d_l]\times S^1 (s+d_l)$. Note that the loop $F(c_l)$ has winding number $l$ and touches both boundaries $\{s\}\times S^1(s+d_l)$ and $\{s+d_l\}\times S^1(s+d_l)$. Thus
	$$\mathrm{length}(c_l)\ge \mathrm{length}(F(c_l))\ge 2\left[d_l^2+l^2\pi^2 (1+(s+d_l)^2)^{-2\alpha}\right]^{1/2}.$$
	It follows from the upper bound of $\mathrm{length}(c_l)$ that
	$$4d_l^2+4\pi^2l^2(1+(s+d_l)^2)^{-2\alpha}\le 4\pi^2l^2(1+s^2)^{-2\alpha},$$
	$$\dfrac{d_l^2}{s^2}\le \epsilon^2 \pi^2 \left[1-\left( \dfrac{1+s^2}{1+(s+d_l)^2} \right)^{2\alpha} \right].$$
	For any sequence $s_i\to\infty$ and a sequence of integers $l_i$ between $\frac{1}{2}\epsilon s_i(1+s_i^2)^{\alpha}$ and $\epsilon s_i (1+s_i^2)^{\alpha}$ such that $\lim\limits_{i\to\infty} d_{l_i}/s_i=\delta\in [0,\epsilon\pi]$, then
	$$\delta^2 \le \epsilon^2 \pi^2 \left[1-\dfrac{1}{(1+\delta)^{4\alpha}}\right]. $$
	Thus as $\epsilon\to 0$, we see that $\delta/\epsilon\to 0$. Together with (1), we conclude that
	$$\lim\limits_{i\to\infty} \dfrac{\mathrm{size}(c_{l_i})}{\mathrm{length}(c_{l_i})}\le \lim\limits_{i\to\infty} \dfrac{d_{l_i}}{C\epsilon s_i}=\dfrac{\delta}{C\epsilon}\to 0, \text{ as }\epsilon\to 0. $$
	This proves (2).
\end{proof}

\subsection{Structure of asymptotic cones of $N$}

To better understand the structure of asymptotic cones of $N$, we will take the isometric $\Gamma$-action on $N$ into account, where $\Gamma=\pi_1(M,p)$. Let $r_i\to\infty$, passing to a subsequence if necessary, we have the following equivariant Gromov-Hausdorff convergence.
\begin{center}
	$\begin{CD}
		(r^{-1}_iN,\tilde{p},\Gamma) @>GH>> 
		(Y,y,G)\\
		@VV\pi V @VV\pi V\\
		(r^{-1}_iM,p) @>GH>> (X,x).
	\end{CD}$
\end{center}

The first author observed that the orbit $Gy$ is not geodesic in $Y$ \cite{Pan21}. Here, we give more detailed description of $(Y,y,G)$ and prove Theorem A.

From the warping functions $f$ and $h$, we see that $(X,x)$ is a half-line $[0,\infty)$ with $x=0$. Since $Y/G$ is isometric to $Z=[0,\infty)$ \cite{FY92}, we can view $Y$ as attaching orbit $Gz$ to each $z\in [0,\infty)$. Topological, $Y$ is homeomorphic to $\mathbb{R}\times [0,\infty)$, where $y=(0,0)$ and $\mathbb{R}\times \{z\}$ corresponds to the orbit $Gz$. Let $\gamma_\infty \in G$ such that $d(\gamma_\infty y,y)=1$. We label $\gamma_\infty y$ as $(1,0)\in \mathbb{R}\times [0,\infty)=Y$. Let $l_i\to \infty$ be a sequence of integers such that
$$(r_i^{-1}N,\tilde{p},\gamma^{l_i})\overset{GH}\longrightarrow (Y,y,\gamma_\infty).$$
Then for any rational number $b=m/k$, 
$$(r_i^{-1}N,\tilde{p},\gamma^{\lfloor bl_i \rfloor})\overset{GH}\longrightarrow (Y,y,g)$$
with $g$ satisfying $g^k=\gamma_\infty^m$. We label the point $gy$ as $(b,0)$. By Lemma \ref{dist_base}, there are constants $C_1,C_2>0$, only depending on $\alpha$ such that
$$C_1|b|^{\frac{1}{1+2\alpha}}\le \lim\limits_{i\to\infty}r_i^{-1}d(\gamma^{\lfloor bl_i \rfloor}\tilde{p},\tilde{p})\le C_2|b|^{\frac{1}{1+2\alpha}}.$$
Therefore, we can define a coordinate $(b,0)$, where $b\in\mathbb{R}$, for any point in $Gy$. Moreover, the coordinate satisfies
$$d((b_1-b_2,0),(0,0))=d((b_1,0),(b_2,0))$$
for all $b_1,b_2\in\mathbb{R}$. This proves the estimate below.

\begin{lem}\label{dist_Gy}
	The distance on $Gy=\mathbb{R}\times \{0\}$ satisfies
	$$C_1\cdot |b_1-b_2|^{\frac{1}{1+2\alpha}}\le d((b_1,0),(b_2,0))\le C_2\cdot |b_1-b_2|^{\frac{1}{1+2\alpha}}$$
	for all $b_1,b_2\in\mathbb{R}$, where $C_1,C_2>0$ only depend on $\alpha$.
\end{lem}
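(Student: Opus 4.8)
The plan is to reduce the general two-point estimate to a one-point estimate by exploiting the isometric $G$-action, and then to read off the power law directly from Lemma \ref{dist_base} through the equivariant Gromov--Hausdorff convergence. Writing $g_b\in G$ for the limit of $\gamma^{\lfloor bl_i\rfloor}$ (so that $g_b y=(b,0)$), the first task is to establish the translation invariance
$$d((b_1,0),(b_2,0))=d((b_1-b_2,0),(0,0)),$$
which is what collapses the two parameters into one. Since $G$ acts by isometries, this amounts to the group-theoretic identity $g_{b_2}^{-1}g_{b_1}=g_{b_1-b_2}$ in the limit. The key point is that $g_{b_2}^{-1}g_{b_1}$ is the limit of $\gamma^{\lfloor b_1 l_i\rfloor-\lfloor b_2 l_i\rfloor}$, whose exponent differs from $\lfloor(b_1-b_2)l_i\rfloor$ by at most $2$; because the single-step displacement satisfies $r_i^{-1}d(\gamma\tilde{p},\tilde{p})\to 0$, perturbing the exponent by a bounded amount has no effect on the limiting isometry, so the two group elements coincide.

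With translation invariance in hand, I would fix $b=b_1-b_2$ and express the distance through the convergence as
$$d((b,0),(0,0))=\lim_{i\to\infty}r_i^{-1}\,d(\gamma^{\lfloor bl_i\rfloor}\tilde{p},\tilde{p}).$$
For $b\ne 0$ the exponent $\lfloor bl_i\rfloor\to\infty$, so it eventually exceeds the threshold $9^{1+\frac{1}{2\alpha}}$ and Lemma \ref{dist_base} applies, sandwiching $d(\gamma^{\lfloor bl_i\rfloor}\tilde{p},\tilde{p})$ between $C\,\lfloor|b|l_i\rfloor^{\frac{1}{1+2\alpha}}-2$ and $9\,\lfloor|b|l_i\rfloor^{\frac{1}{1+2\alpha}}$. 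Dividing by $r_i$ and letting $i\to\infty$ kills the additive $-2$ term and gives $\lfloor|b|l_i\rfloor^{\frac{1}{1+2\alpha}}\sim|b|^{\frac{1}{1+2\alpha}}l_i^{\frac{1}{1+2\alpha}}$. To turn the stray factor $r_i^{-1}l_i^{\frac{1}{1+2\alpha}}$ into honest constants I would invoke the normalization $d(\gamma_\infty y,y)=1$ together with Lemma \ref{dist_base} applied to $l_i$ itself: this pins down $r_i^{-1}l_i^{\frac{1}{1+2\alpha}}$ between $\alpha$-dependent bounds (concretely, between $1/9$ and $1/C$). The two estimates then assemble into
$$C_1|b|^{\frac{1}{1+2\alpha}}\le d((b,0),(0,0))\le C_2|b|^{\frac{1}{1+2\alpha}},$$
with $C_1,C_2$ depending only on $\alpha$. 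The case $b<0$ follows since $g_{-b}=g_b^{-1}$ yields $d((-b,0),(0,0))=d((b,0),(0,0))$, which justifies the absolute value.

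I expect the main obstacle to be the translation-invariance step rather than the power law. The power law is essentially Lemma \ref{dist_base} read through the rescaling, so it reduces to bookkeeping of the $\alpha$-dependent constants coming from the $2$ and the $9$. The invariance, by contrast, requires care in controlling how the floor functions and the group multiplication interact in the limit, and it is precisely there that one must use $r_i^{-1}d(\gamma\tilde{p},\tilde{p})\to 0$ to guarantee that the limiting group elements $g_b$ are unambiguous and additive in the coordinate $b$.
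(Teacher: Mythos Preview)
Your proposal is correct and follows essentially the same route as the paper: use the equivariant Gromov--Hausdorff convergence together with Lemma~\ref{dist_base} to get the one-point power law, and use the isometric $G$-action to reduce the two-point estimate to the one-point case via translation invariance. You are in fact more explicit than the paper on the points it leaves implicit---the floor-function discrepancy, the normalization $r_i^{-1}l_i^{1/(1+2\alpha)}\in[1/9,1/C]$ coming from $d(\gamma_\infty y,y)=1$, and the sign of $b$---but the underlying argument is the same.
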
 

\begin{proof}[Proof of Theorem A]
	Let $\alpha=\beta/2$. Let $(M,g_\alpha)$ be the open manifold with $\mathrm{Ric}>0$ constructed in Section 1.1 and let $(N,\tilde{g_\alpha})$ be its Riemannian universal cover. 
	
	(1) First note that $y$ is singular. In fact, let $(Y',y')$ be a tangent cone of $Y$ at $y$. By a standard diagonal argument, $(Y',y')$ is an asymptotic cone of $N$; in other words, we can find $s_i\to \infty$ such that
	$$(s_i^{-1}N,\tilde{p},\Gamma)\overset{GH}\longrightarrow (Y',y',G').$$
	The orbit $G'y'$ satisfies the distance estimate in Lemma \ref{dist_Gy}. Therefore, $(Y',y')$ is not isometric to any $\mathbb{R}^k$.
	
	Next we apply Lemma \ref{dist_far} to show that any point outside $Gy$ is $2$-regular. Let $z\in X$ and let $\tilde{z}\in Y$ be a lift of $z$ such that $d(\tilde{z},y)=d(z,x)$. Since any point outside $Gy$ is some $g\tilde{z}$, it suffices to show that $\tilde{z}$ is $2$-regular.
	
	We choose a sequence of points $\tilde{q_i}$ such that $d(\tilde{p},\tilde{q_i})=r_it$, where $t=d(z,x)$, and
	$$(r_i^{-1}N,\tilde{p},\tilde{q_i})\overset{GH}\longrightarrow (Y,y,\tilde{z}).$$
	We write $s_i=r_it$. Let $\epsilon\in(0,1)$ and let $l_i\to\infty$ be a sequence of integers between $\frac{1}{2}\epsilon s_i(1+s_i^2)^{\alpha}$ and $\epsilon s_i (1+s_i^2)^{\alpha}$. By Lemma \ref{dist_far}(1),
	$$(r_i^{-1}N,\tilde{q_i},\gamma^{l_i})\overset{GH}\longrightarrow(Y,\tilde{z},g)$$
	with $C\epsilon t \le d(g\tilde{z},\tilde{z})\le \epsilon t$, where $C\in(0,1)$ only depends on $\alpha$. Let $\sigma_i$ be a minimal geodesic from $\tilde{q_i}$ to $\gamma^{l_i}\tilde{q_i}$. After blowing down, $\sigma_i$ converges to a minimal geodesic $\sigma$ from $\tilde{z}$ to $g\tilde{z}$. Let $m$ be the midpoint of $\sigma$ and let $h\tilde{z}\in G\tilde{z}$ be a closest point in the orbit to $m$. We write $\delta=d(m,h\tilde{z})$. Then $\tau:=h^{-1}\sigma$ is a segment whose midpoint has distance $\delta$ to $\tilde{z}$.
	
	Now we change $\epsilon$. To emphasize the dependence on $\epsilon$, we will write $\tau_\epsilon$ and $\delta_\epsilon$ instead of $\tau$ and $\delta$, respectively. By our construction and Lemma \ref{dist_far}(2), minimal geodesics $\tau_\epsilon$ satisfies \\
	(i) $\mathrm{length}(\tau_\epsilon)\in [C\epsilon t,\epsilon t]$;\\
	(ii) midpoint of $\tau_\epsilon$ has distance $\delta_\epsilon$ to $\tilde{z}$, where $\delta_\epsilon/\epsilon\to 0$ as $\epsilon\to 0$.\\
	Together with Cheeger-Colding splitting theorem \cite{ChCo96}, we see that any tangent cone at $\tilde{z}$ is isometric to $\mathbb{R}^2$.
	
	(2) This follows directly from the distance estimate in Lemma \ref{dist_Gy}. In fact, the maps
	$$\phi: (Gy,d)\to (\mathbb{R},d_0),\quad (b,0)\mapsto b,$$
	$$\psi: (\mathbb{R},d_0)\to (Gy,d),\quad b\mapsto (b,0),$$
	are H\"older continuous with exponent $1+2\alpha$ and $1/(1+2\alpha)$ respectively, where $d_0$ is the Euclidean distance on $\mathbb{R}$. Thus
	$$\dim_H (\mathbb{R},d_0)\le \dfrac{1}{1+2\alpha}\dim_H(Gy,d),$$ $$\dim_H(Gy,d)\le (1+2\alpha)\dim_H(\mathbb{R},d_0).$$
	We conclude that $Gy$ has Hausdorff dimension $1+2\alpha$.
\end{proof}

We end our paper with some remarks related to our examples.

\begin{rem}
	One can also derive that every point $z\not\in Gy$ is regular from the regularity theory. If $z$ is singular, then any point in $Gz$ is also singular; this would contradict the path-connectedness of the regular set \cite{CoNa12}.
\end{rem}

\begin{rem}\label{rem_polar}
	For any asymptotic cone $(Y,y)$ of $(N,\tilde{p})$, $Y$ is non-polar at $y$. In fact, we consider a minimal geodesic $\sigma$ from $y$ to a point $gy\not= y$. We claim that $\sigma$ cannot be extended further across $gy$ as a minimal geodesic. Suppose that we can extend $\sigma$ to $\overline{\sigma}$. First note that the extended part is not contained in $Gy$; otherwise, it would have infinite length by Lemma \ref{dist_Gy}. Thus the minimal geodesic $\overline{\sigma}$ contains two regular points with a singular point $gy$ in between; this would contradict the H\"older continuity of tangent cones \cite{CoNa12,Deng20}.
\end{rem}

\begin{rem}
	If the warping function $h$ has logarithm decay $\ln^{-\alpha}(r)$ or it decays to a positive number $c>0$, then $(Y,y)$, the corresponding asymptotic cone of the universal cover, splits isometrically as $\mathbb{R}\times [0,\infty)$; see \cite{Pan21}.
\end{rem}

\begin{rem}
	We suspect that the structure of fundamental group $\pi_1(M)$ is related to the structure of singular set of $Y$: if $\pi_1(M)$ contains a torsion-free nilpotent subgroup of class $\ge 2$, is it true that for some asymptotic cone $(Y,y,G)$ of the universal cover $(N,\tilde{p},\pi_1(M,p))$, its orbit $Gy$ must be non-rectifiable? The first author proved that for such a fundamental group, in some $(Y,y,G)$ there is a geodesic connecting two points in $Gy$ but definite away from $Gy$; see \cite{Pan21,Pan20}.
\end{rem}

\begin{rem}\label{rem_cpt}We can construct a compact Ricci limit space with same feature as in Theorem A as follows. Let $\beta=2\alpha$, $(M,g_\alpha)$, and $(N,\tilde{g_\alpha})$ as constructed before. For a sequence $r_i\to\infty$, we choose $\gamma_i=\gamma^{l_i}\in \Gamma$, where $l_i\to\infty$, so that 
	$$(r_i^{-1}N,\tilde{p},\gamma^{l_i},\Gamma)\overset{GH}\longrightarrow (Y,y,\gamma_\infty,G),$$
	where $\gamma_\infty \in G$ with $d(\gamma_\infty y,y)=1$. Let  $(\overline{M_i},\bar{p_i})= (N,\tilde{p})/ \langle\gamma_i \rangle$. Then
	$$(r_i^{-1}\overline{M_i},\bar{p_i})\overset{GH}\longrightarrow (Y/\mathbb{Z},\bar{y}),$$
	where $\mathbb{Z}=\langle \gamma_\infty \rangle$ is a discrete subgroup of $G$ and the limit space $Y/\mathbb{Z}$ has Hausdorff dimension $1+\beta$ when $\beta>1$.
	
	Next, we glue two copies of a bounded region in $r_i^{-1}\overline{M_i}$ along the boundary to obtain a compact Ricci limit space. Let 
	$K_i=\pi_i^{-1}(B_{r_i}(p))\subseteq \overline{M_i}$, where $\pi_i:\overline{M_i}\to M$ is the covering map.
	The principal eigenvalues of the second fundamental from of $\partial B_r(p)$ are $\tfrac{f'(r)}{f(r)}, \tfrac{h'(r)}{h(r)}$, which decay like $r^{-1}$ as $r \to \infty$. Thus the second fundamental forms of $r_i^{-1}(\partial K_i)$ are  uniformly bounded. Now one can extending the boundary of $K_i$ with definite length to have totally geodesic boundary, then we can double it to have closed manifolds with Ricci curvature uniformly bounded from below and diameter uniformly bounded from above. The Hausdorff dimension of its Gromov-Hausdorff limit is also $1+ \beta$ when $\beta>1$.
\end{rem}

\end{document}